\numberwithin{equation}{section}
\newtheorem{theorem}{Theorem}
\newtheorem{lemma}{Lemma}
\newtheorem{corollary}{Corollary}
\newtheorem{remark}{Remark}
\newtheorem{definition}{Definition}
\newtheorem{proposition}{Proposition}
\begin{document}
\title[  Uniform  limit theorems under random truncation]
 { uniform  limit theorems under random truncation}

\author{ V. Fakoor$^{1}$ \quad R. Zamini$^{2}$ }

\email{fakoor@math.um.ac.ir}
\email{rahelehzamini@yahoo.com}

\keywords{  Left-truncation,  Lynden-Bell estimator, Uniform limit theorems}

\maketitle

\begin{center}
{\it $^1$Department of Statistics, Faculty  of Mathematical Sciences,\\
Ferdowsi University of Mashhad, Iran.}\\
{\it $^2$ Department of   Mathematics and  Computer Science, Kharazmi  University, Tehran,
 Iran.\\}
\end{center}

\begin{abstract}
In this paper we study  uniform versions of  two limit theorems in random left truncation model (RLTM). The law of large numbers (LLN) and   the central limit theorem (CLT) have been obtained under the bracketing entropy conditions in this setting. The uniform LLN and the uniform CLT of the present paper extend the one dimensional LLN and the one dimensional CLT under RLTM respectively.
\end{abstract}

\section{\bf Introduction and Preliminaries}
Limit theorems have always been of central importance in probability theory.
Two most important  of  limits theorems  constitute: the  LLN and  the CLT. These topics have their own importance in the classical probability
theory as well applications in  statistical inference. In
recent years, under the topic of empirical process theory, authors have become interested
in the uniform analogue of the two  theorems: the uniform LLN of
Glivenko-Cantelli type and  the uniform CLT for Donsker type.

Let $X_{1},\ldots ,X_{n}$ be a sequence of random variables defined  on a probability space
$(\Omega, \mathcal{T}, P)$  with common law  distribution   $\mathbb{P}$ on $\mathbb{R}$. We denote by
$\mathbb{P}_n=n^{-1}\sum_{i=1}^n {\delta}_(X_i)$ the usual empirical measure, where ${\delta}_x$ is a Kronecker delta.
Let $\mathcal{F}$
be a set of measurable  real valued functions on $\mathbb{R}$.
The unifom version of the LLN states that
\begin{eqnarray}\label{ullw}
\sup_{\varphi \in \mathcal{F} }(\mathbb{P}_n-\mathbb{P})\varphi\rightarrow 0 \quad a.s.,
\end{eqnarray}
where $\mathbb{P}\varphi=\int \varphi d\mathbb{P}$.
A class $\mathcal{F}$,  for which \eqref{ullw}       holds,   is called a \textit{Glivenko--Cantelli class}.
DeHardt [\ref{De}] obtained the uniform LLN  for the sequence
of independent and identically distributed (i.i.d.)  random variables  under bracketing entropy (see for example Van der  Vaart and Wellner [\ref{Van}]).
DeHardt's result states that if $\mathcal{F}$ has a bracketing entropy then \eqref{ullw}
is obtained.

The $\mathcal{F}-$indexed empirical process is  given by
\begin{eqnarray}\label{ulw}
\varphi \mapsto \sqrt{n}(\mathbb{P}_n-\mathbb{P})\varphi \quad   \varphi \in  \mathcal{F}.
\end{eqnarray}
This process  can be viewed as a map into ${\ell}^{\infty}(\mathcal{F})$,
 where  ${\ell}^{\infty}(\mathcal{F})$ denotes the Banach space of bounded real-valued functions $\psi$ on $\mathcal{F}$, normed by
 $||\psi||_{\mathcal{F}}:=\sup_{\varphi \in \mathcal{F} } |\psi(\varphi)|$.  The processes $\varphi  \mapsto \sqrt{n}(\mathbb{P}_n-\mathbb{P})\varphi, \varphi \in  \mathcal{F}$
 converge in law to a Gaussian process in  ${\ell}^{\infty}(\mathcal{F})$, called the $\mathbb{P}-$Brownian bridge indexed by
$ \mathcal{F}.$  A  class $\mathcal{F}$   for which this is the true,   is named  a  $\mathbb{P}-$Donsker class ( see Van der Vaart and Wellner [\ref{Van}]).

The study of the asymptotic behaviors of  the  process \eqref{ullw} is a central topic in empirical process
theory, and it is well known that this behavior depends on the complexity or  "entropy" of $\mathcal{F}$.

 Ossiander [\ref{Oss}] developed
the uniform CLT for the sequence of i.i.d.
 random variables under certain metric integrability conditions.  The Ossiander result states that under an integrability condition on the metric entropy with bracketing in $L^2(\mathbb{P})$, $\sqrt{n}(\mathbb{P}_n-\mathbb{P})$,  as random elements in ${\ell}^{\infty}(\mathcal{F})$,
 converges in law to a mean zero Gaussian process  $\{W(\varphi) : \varphi \in \mathcal{F}\}$   with the covariance structure $EW({\varphi}_1)W({\varphi}_2) = E{\varphi}_1(X){\varphi}_2(X)$.

There is  lack of works on uniform limit theorems  for a  RLTM in the literature.
 The goal of the paper is to prove two fundamental theorems  of uniform LLN and  CLT . Here we present  the uniform LLN  of
Glivenko-Cantelli type and  the uniform CLT  for Donsker type
 in the  RLTM which postpone to section 2.

Consider that there is a finite population $\mathcal{P}$ whose size is large, deterministic and is denoted by $N$. Each element of $\mathcal{P}$ contains two independent  random variables denoted by $Y$ and $T$ with  distribution functions, respectively shown by $F$ and $G$.
$Y$ is the variable of interest and $T$ is the left truncation random variable.
So until now we have $N$ i.i.d.  random variables $\{(Y_i,T_i); 1\leq i \leq N \}$. Suppose that  $(Y,T)$ is observed if $Y \geq T$, otherwise we have no information about them. We show the observable random variables by $\{(Y_i,T_i); 1\leq i \leq n \}$ with $Y_i \geq T_i$.
 As a direct result $n$ is a binomial random variable, with sample size $N$ and success probability $\alpha  = P \left( Y \ge T \right)$.

  The conditional distribution of $(Y,T)$ given $Y\geq T$ is denoted by $H^{*}$,
\begin{eqnarray}
{H^*}\left( {y,t} \right) &=& P \left( {Y \le y,T \le t}| Y \geq T \right) \nonumber\\
 &=& {\alpha ^{ - 1}}\int_{ - \infty }^y {G\left( {t \wedge u} \right)dF\left( u \right)},
\end{eqnarray}
Marginal distribution functions of $Y$ and $T$ are given by
\begin{eqnarray}
{F^*}\left( y \right) = {H^*}\left( {y,\infty } \right) = {\alpha ^{ - 1}}\int_{ - \infty }^y {G\left( u \right)dF\left( u \right)}, \nonumber
\end{eqnarray}
and
\begin{eqnarray}
G^*(t) = H^*(\infty,t) = \alpha ^{ - 1} \int_{ - \infty }^t (1-F(u))d G(u). \nonumber
\end{eqnarray}
The corresponding empirical distributions are defined  by
\begin{eqnarray}
F_n^*(y) = \frac{1}{n} \sum_{i=1}^n I(Y_i\leq y), \quad  G_n^*(t) = \frac{1}{n} \sum_{i=1}^n I(T_i\leq t), \nonumber
\end{eqnarray}
where $I(A)$ is the indicator function of $A$.

Random truncation restricts the observation range of $X$ and $Y.$ Only $F_0(x)=P(X \leq x |X \geq a_G)$
and $G_0(y)=P(Y \leq y |Y \leq b_F)$ can be estimated, where
\begin{eqnarray*}
a_{F}:=\inf \{x:\ F(x)>0\}\quad   and\quad  b_{F}:=\sup \{x:\ F(x)<1\}
\end{eqnarray*}
are the lower and upper boundaries of the support of the distribution of $X$. Let $a_G$  and $b_G$  be similarly defined.
If $a_G \leq a_F$, then $F=F_0$.

 The nonparametric maximum likelihood estimator of $F$ was first derived by   Lynden-Bell [\ref{Lynden-Bell}], which we refer to it by $F_n(\cdot)$.
If there are no ties in the data, it is given by
\begin{eqnarray}\label{lynden estimators}
{F_n}\left( y \right) = 1 - \prod\limits_{i:{Y_i} \le y} {\left[ {\frac{{n{C_n}\left( {{Y_i}} \right) - 1}}{{n{C_n}\left( {{Y_i}} \right)}}} \right]}, \,\,\,
\end{eqnarray}
in which
 \begin{eqnarray*}{C_n}( y ) = G_n^*( y ) - F_n^*\left( {y^- } \right) = \frac{1}{n}\sum\limits_{i = 1}^n {{I_{\left\{ {{T_i} \le y \le {Y_i}} \right\}}}} , \,\, y \in \mathbb{R},
 \end{eqnarray*}
where is the empirical estimator of
\begin{eqnarray}\label{C(y)}
C( y ): = {G^*}( y ) - {F^*}\left( y \right) = {\alpha ^{ - 1}}G\left( y \right)\left( {1 - F\left( y^- \right)} \right),\,\,
y \in \mathbb{R}.
\end{eqnarray}
Left limit $\lim_{y\uparrow s} g(y)$ is denoted by $g(s^-)$.

The following definitions, are conveniently collected in  Van der
Vaart and Wellner [\ref{Van}], so we follow their notation. First, we define entropy with bracketing. Let  $(\mathcal{F}, ||\cdot||_p)$
be a subset of a normed space of $(L^p(\mathbb{P}),||\cdot||_p )$ where
\begin{eqnarray*}
 (||\varphi||_p)^p=\int |\varphi|^p d\mathbb{P}.
\end{eqnarray*}
 \begin{definition} Given two functions $l$ and $u$ in $L^p(\mathbb{P})$, the bracket  $[l,u]$ is the set of all functions $\varphi$ with $l \leq \varphi \leq u.$
 An $\epsilon$-bracket in $L^p(\mathbb{P})$ is a bracket  $[l,u]$ with  $||u-l ||_p<\epsilon$. The bracketing number
 $N_{[ ~]}(\epsilon,\mathcal{F}, ||\cdot ||_p )$ is the minimum number of $\epsilon$-brackets needed to cover
  $\mathcal{F}$. The entropy with bracketing is the logarithm of the bracketing number.
 \end{definition}
  The notion of entropy with bracketing has been introduced by Dudley [\ref{Dud}]  and the importance of $L^2(\mathbb{P})$-entropy with bracketing has
  been pointed out by Ossiander [\ref{Oss}]. In the rest of the paper, whenever unambiguous we write $N_{[ ~]}(\epsilon)$ instead of $N_{[ ~]}(\epsilon,\mathcal{F}, ||\cdot ||_p ).$
 \begin{definition}
The bracketing entropy integral of a class
of functions $\mathcal{F}$  is defined by
\begin{eqnarray}
  J(\delta):= J_{[ ~]}(\delta, \mathcal{F}, ||\cdot ||_p ) =\int_0^\delta \sqrt{\log N_{[ ~]}(\epsilon,\mathcal{F}, ||\cdot ||_p ) }  d\epsilon,
\end{eqnarray}
where  $N_{[~ ]}(\cdot)$ are the bracketing numbers of $\mathcal{F}$ with
respect to the norm $|| \cdot ||_p.$
\end{definition}
\begin{definition}
 A measurable function $\Phi $ is an envelope function for $\mathcal{F}$ if $|\varphi(x)|\leq \Phi(x)$ for all $\varphi \in \mathcal{F}$ and all $x$. If
 $\sup_{\varphi \in \mathcal{F}}|\varphi(x)|$ is  measurable, it will be called the envelope function of $\mathcal{F}$.
\end{definition}
\begin{definition}
A sequence of ${\ell}^{\infty}(\mathcal{F})$-valued random functions $\{Y_n\}$  converges almost surely
to a constant $c$ if $P^*(\sup_{\varphi \in \mathcal{F} }Y_n(\varphi) \rightarrow c)=P(\sup_{\varphi \in \mathcal{F} }Y_n(\varphi)^* \rightarrow c)=1$.
Here $P^*$ denotes the outer probability, and $\sup_{\varphi \in \mathcal{F} }Y_n(\varphi)^* $ is the measurable cover
function of $\sup_{\varphi \in \mathcal{F} }Y_n(\varphi) $.
\end{definition}
We use the following definition of weak convergence which is originally due to Hoffman-J\"{o}rgensen [\ref{Hoff}].
\begin{definition} A sequence of   ${\ell}^{\infty}(\mathcal{F})$-valued random functions $\{Y_n\}$  converges in law to a ${\ell}^{\infty}(\mathcal{F})$-valued random function $\{Y\}$  whose law concentrates on a separable subset of ${\ell}^{\infty}(\mathcal{F})$ if
 \begin{eqnarray*}Eg(Y)=\lim_{n\rightarrow\infty}E^*g(Y_n)   \quad  \forall g \in U({\ell}^{\infty}(\mathcal{F}), ||\cdot||_{\mathcal{F}}),
  \end{eqnarray*}
where  $U({\ell}^{\infty}(\mathcal{F}), ||\cdot||_{\mathcal{F}})$ is the set of  all bounded, uniformly continuous function from
 $({\ell}^{\infty}(\mathcal{F}), ||\cdot||_{\mathcal{F}})$  into $\mathbb{R}.$
 Here $E^*$ denotes the upper expectation with respect to the outer probability $P^*.$ We denote this convergence
  by $Y_n\Rightarrow Y.$
\end{definition}

The layout of this paper is as follows.  In Section 2, we obtain our main theorems and results. In order to prove the main theorems we need the some auxiliary results, which are contained in Section 3.

\section{\bf Main results}


\subsection{\bf Uniform LLN}
 In this subsection, we assume that  $\mathcal{F} $ is  a class of real-valued measurable functions on $\mathbb{R}$ such that  $\mathcal{F} \subseteq L^1(F):= \{\varphi: \int |\varphi(x)| F(dx)< \infty \}.$
 The below assumption is imposed throughout this subsection to achieve Glivenko-Cantelli type theorem
for $W_n(\varphi):=\int \varphi d( F_n-F).$ \\

{\bf Assumption A} $F$ and $G$ are continuous with $a_G < b_F.$

\begin{theorem}\label{ulln}
Let $N_{[~]}(\epsilon, \mathcal{F}, L^1(F))<\infty,$ for every $\epsilon>0$, then under Assumption $\bold A$ we have
 \begin{eqnarray}
 \sup_{\varphi \in \mathcal{F}} \left|  \int \varphi d(F_n-F) \right| \longrightarrow 0 \quad a.s.
\end{eqnarray}
as $n\rightarrow\infty$.
\end{theorem}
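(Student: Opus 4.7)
The plan is to run the classical bracketing argument of DeHardt~[\ref{De}] with the empirical measure replaced by the measure induced by the Lynden--Bell estimator $F_n$, deferring the heart of the matter to a one-dimensional strong law for $F_n$-integrals of a fixed integrand $\psi\in L^1(F)$, which is precisely what the auxiliary Section~3 is designed to supply. Fix $\epsilon>0$. Since $N_{[\,]}(\epsilon,\mathcal F,L^1(F))<\infty$, cover $\mathcal F$ by finitely many brackets $[\ell_1,u_1],\dots,[\ell_m,u_m]$ satisfying $\int(u_i-\ell_i)\,dF<\epsilon$. Under Assumption~A the Lynden--Bell estimator is a bona fide distribution function, so for any $\varphi\in\mathcal F$ sandwiched in the $i$-th bracket one has $\int\varphi\,dF_n\le\int u_i\,dF_n$ and $\int\varphi\,dF\ge\int\ell_i\,dF$, which combine to
\[
W_n(\varphi)\;\le\;\int u_i\,d(F_n-F)+\int(u_i-\ell_i)\,dF\;\le\;W_n(u_i)+\epsilon,
\]
together with the symmetric lower bound $W_n(\varphi)\ge W_n(\ell_i)-\epsilon$.

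Taking the supremum over $\varphi\in\mathcal F$ yields
\[
\sup_{\varphi\in\mathcal F}|W_n(\varphi)|\;\le\;\max_{1\le i\le m}\bigl(|W_n(u_i)|\vee|W_n(\ell_i)|\bigr)+\epsilon.
\]
Each $u_i,\ell_i$ lies in $L^1(F)$, so invoking the one-dimensional Stute--Wang type strong law for Lynden--Bell integrals (stated and proved as an auxiliary result in Section~3), namely $W_n(\psi)\to 0$ a.s.\ for every $\psi\in L^1(F)$ under Assumption~A, makes each of the $2m$ quantities in the maximum tend to $0$ a.s. Hence $\limsup_n\sup_{\varphi\in\mathcal F}|W_n(\varphi)|\le\epsilon$ almost surely, and letting $\epsilon$ run through a sequence decreasing to $0$ finishes the proof. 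The outer-probability setting of Definition~4 absorbs any measurability nuisance attached to the supremum over $\mathcal F$.

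The main obstacle is the scalar strong law used in the display above. Because $F_n$ is defined through the product-limit formula \eqref{lynden estimators} and is a highly nonlinear functional of the data driven by the risk-set process $C_n$, the convergence $\int\psi\,dF_n\to\int\psi\,dF$ a.s.\ is not a direct consequence of the ordinary SLLN; it requires exploiting the explicit representation of $F_n$ together with a Glivenko--Cantelli bound for $C_n$, and here Assumption~A (in particular $a_G<b_F$) enters in order to keep $C(\cdot)$ bounded away from zero on the relevant range so that the inversion of \eqref{C(y)} stays under control. Once that scalar statement is in hand, the uniformity in $\varphi$ is extracted cheaply from the finite bracketing cover, which is why the bracketing hypothesis of the theorem needs nothing stronger than finiteness of $N_{[\,]}(\epsilon,\mathcal F,L^1(F))$.
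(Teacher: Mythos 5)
Your proposal is correct and follows essentially the same route as the paper: a DeHardt-style finite bracketing cover reduces the uniform statement to the scalar strong law $\int\psi\,dF_n\to\int\psi\,dF$ a.s.\ for a fixed $\psi\in L^1(F)$, which the paper supplies as Lemma~\ref{Hhe} via Theorem~4.3 of He and Yang~[\ref{He}] (rather than a Stute--Wang result, but that is only a matter of attribution). The only cosmetic difference is that you bound $\sup_\varphi|W_n(\varphi)|$ in one step using both bracket endpoints, whereas the paper treats $\sup_\varphi W_n(\varphi)$ and $\inf_\varphi W_n(\varphi)$ separately.
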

 In order to start the proof of Theorem 1, we shall state the following lemma.

\begin{lemma}\label{Hhe}Under $\bold A,$  for any measurable  $\varphi$,  one can write
\[ \lim_{n\rightarrow\infty} \int \varphi dF_n=\int \varphi dF \quad a.s. \]
\end{lemma}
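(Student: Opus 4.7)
The plan is to exploit the discrete representation of the Lynden--Bell estimator to rewrite the target integral as an average over the observed pairs, and then reduce the claim to an ordinary strong law of large numbers applied to the i.i.d. sample $Y_{1},\ldots,Y_{n}$. Concretely, from the product formula \eqref{lynden estimators} one reads off that the jump of $F_{n}$ at an observed $Y_{i}$ equals $[1-F_{n}(Y_{i}^{-})]/[nC_{n}(Y_{i})]$, which gives the identity
\begin{eqnarray*}
\int \varphi\,dF_{n} \;=\; \frac{1}{n}\sum_{i=1}^{n} \varphi(Y_{i})\,\frac{1-F_{n}(Y_{i}^{-})}{C_{n}(Y_{i})}.
\end{eqnarray*}
Interpreting the statement for $\varphi\in L^{1}(F)$ (the relevant case for Theorem \ref{ulln}), the task is to show that the right-hand side converges a.s.\ to $\int \varphi\,dF$.

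The first step is to identify the limiting weights. Under Assumption \textbf{A}, the strong uniform consistency of $F_{n}$ for $F$ together with the Glivenko--Cantelli-type convergence $C_{n}\to C$ on closed subintervals of $(a_{G},b_{F})$, combined with \eqref{C(y)}, yield
\begin{eqnarray*}
\frac{1-F_{n}(y^{-})}{C_{n}(y)} \;\longrightarrow\; \frac{1-F(y^{-})}{C(y)} \;=\; \frac{\alpha}{G(y)}, \qquad \text{a.s.},
\end{eqnarray*}
uniformly on compact subintervals of $(a_{G},b_{F})$. The second step is a classical SLLN: since the $Y_{i}$ are i.i.d.\ with law $F^{*}$ and $dF^{*}(y)=\alpha^{-1}G(y)\,dF(y)$,
\begin{eqnarray*}
\mathbb{E}\!\left[\frac{\alpha\,\varphi(Y_{1})}{G(Y_{1})}\right] \;=\; \int \frac{\alpha\,\varphi(y)}{G(y)}\,dF^{*}(y) \;=\; \int \varphi\,dF,
\end{eqnarray*}
which is finite by the $L^{1}(F)$ hypothesis, so that $\frac{1}{n}\sum_{i=1}^{n} \alpha\varphi(Y_{i})/G(Y_{i}) \to \int \varphi\,dF$ almost surely.

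The final step is to compare the two sums: write the difference between $\int\varphi\,dF_{n}$ and $\int\varphi\,dF$ as the SLLN remainder plus an error involving $(1-F_{n}(Y_{i}^{-}))/C_{n}(Y_{i}) - \alpha/G(Y_{i})$. On any compact interval $[a,b]\subset(a_{G},b_{F})$ this error tends to zero a.s.\ by the uniform convergence of the weights and an SLLN applied to $|\varphi(Y_{i})|/G(Y_{i})$. The main obstacle — and the place where Assumption \textbf{A} is used essentially — is the control of the boundary contributions from $Y_{i}\notin[a,b]$: the denominator $C_{n}(Y_{i})$ can be small when $Y_{i}$ is near $a_{G}$ (because $G$ is small there) or near $b_{F}$ (because $1-F$ is small there). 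These contributions are tamed by combining continuity of $F$ and $G$, the hypothesis $a_{G}<b_{F}$ (which ensures $\alpha>0$), the integrability $\int |\varphi|/G\,dF^{*}=\alpha^{-1}\int|\varphi|\,dF<\infty$, and a standard truncation/envelope argument based on strong consistency of $F_{n}$ and $G_{n}^{*}$ up to the endpoints of the support (Woodroofe--Stute--Wang-type results). This boundary control is where the real technical work is concentrated; the interior analysis is a straightforward SLLN.
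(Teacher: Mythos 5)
Your proposal attacks the lemma head-on, whereas the paper's own proof is a one-line reduction: write $\varphi=\varphi^{+}-\varphi^{-}$ and invoke Theorem 4.3 of He and Yang [\ref{He}], which is exactly the strong law for Lynden--Bell integrals. What you have written is, in effect, a sketch of the proof of that cited theorem: the jump representation $\int\varphi\,dF_{n}=n^{-1}\sum_{i}\varphi(Y_{i})\,(1-F_{n}(Y_{i}^{-}))/C_{n}(Y_{i})$, identification of the limiting weight $\alpha/G$, an SLLN for the interior, and boundary control. The problem is that the only genuinely hard step --- the boundary contributions, above all near $a_{G}$, where $G$ and hence $C$ vanish and $C_{n}(Y_{i})$ can be as small as $1/n$ --- is precisely the step you do not carry out. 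You assert it is ``tamed by \dots a standard truncation/envelope argument based on \dots Woodroofe--Stute--Wang-type results,'' but there is no off-the-shelf estimate here: showing that $\sum_{Y_{i}<a}|\varphi(Y_{i})|\,(1-F_{n}(Y_{i}^{-}))/(nC_{n}(Y_{i}))$ is uniformly negligible as $a\downarrow a_{G}$, \emph{without} auxiliary integrability hypotheses such as $\int dF/G<\infty$, is essentially the entire content of He and Yang's paper. As written, your argument proves the easy interior part and postulates the hard part, so it has a genuine gap. The economical repair is to do what the authors do: reduce to nonnegative $\varphi$ and cite [\ref{He}] directly; otherwise you must reproduce their boundary analysis in full.

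A secondary imprecision: your identification of the limiting weight, $(1-F_{n}(y^{-}))/C_{n}(y)\to(1-F(y^{-}))/C(y)=\alpha/G(y)$, tacitly assumes $F_{n}\to F$. Under Assumption A alone ($a_{G}<b_{F}$, with $a_{G}>a_{F}$ not excluded) the Lynden--Bell estimator is consistent for $F_{0}(\cdot)=P(Y\le\cdot\mid Y\ge a_{G})$ rather than for $F$, and the limit of $\int\varphi\,dF_{n}$ is then $\int\varphi\,dF_{0}$. Your own SLLN computation hints at this: $\int\alpha\varphi G^{-1}\,dF^{*}$ equals the integral of $\varphi\,dF$ over $\{G>0\}$, which coincides with $\int\varphi\,dF$ only when $a_{G}\le a_{F}$. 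The lemma as stated in the paper shares this looseness, but a self-contained proof such as yours has to confront it explicitly, either by adding $a_{G}\le a_{F}$ (as in Assumption B) or by carrying the normalization $1-F(a_{G}^{-})$ through the computation.
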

\begin{proof}
 By using relation of $\varphi=\varphi^+-\varphi^-$ in Theorem 4.3. of  He and Yang [\ref{He}] we can obtain the result.
\end{proof}

\begin{proof}[\textbf{Proof of Theorem \ref{ulln}}]
Let $ \epsilon >0$ is given.
By definition of the bracketing number, we can find  finitely many $\epsilon$-brackets $[l_i,u_i]$ whose union contains $\mathcal{F}$ and such that
$\int (u_i - l_i )d F<\epsilon$ for every $i=1,\ldots,N_{[~ ] }(\epsilon) $.
Then, for every $\varphi \in  \mathcal{F},$ there is bracket such that
\begin{eqnarray*}\label{}
W_n(\varphi) &=& \int \varphi dF_n-\int \varphi F\\
&\leq &  \int u_i d(F_n - F) + \int (u_i - l_i )d F \\
\end{eqnarray*}
Therefore,
\[
\sup_{\varphi \in \mathcal{F}}W_n(\varphi)  \leq  \max_{1\leq i \leq N_{[~] }(\epsilon)}  \int u_i d(F_n - F) +\epsilon
\]
Thus, by Lemma \ref{Hhe}
\[\limsup_{n \rightarrow \infty }  \sup_{\varphi \in \mathcal{F}}W_n(\varphi)  \leq  \epsilon \quad a.s. \]
Combination with a similar argument for $\inf_{\varphi \in \mathcal{F}}W_n(\varphi)$ yields that
$$\limsup_{n\rightarrow\infty} \sup_{\varphi \in \mathcal{F}}|W_n(\varphi)|^*\leq\epsilon  \quad a.s.$$
for every $\epsilon >0$.  Take a sequence $\epsilon_m\downarrow 0$ to see that
$\limsup$ must actually be zero almost surely.
\end{proof}
 \begin{corollary} Let ${\varphi}_0 : \mathbb{R} \rightarrow \mathbb{R}$ be a measurable function such that $\int |{\varphi}_0 | dF < \infty. $ Then,
 \begin{eqnarray}
   \sup_{t \in R} \left|  \int_{-\infty }^t  {\varphi}_0 (x) (F_n-F)(dx) \right| \longrightarrow 0 \quad a.s.
\end{eqnarray}
as $n\rightarrow\infty$.
\end{corollary}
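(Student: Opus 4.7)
The plan is to reduce the corollary to Theorem \ref{ulln} by choosing the class
\[
\mathcal{F}_0 := \{ \varphi_0 \cdot I_{(-\infty,t]} : t \in \mathbb{R} \},
\]
which is a subset of $L^1(F)$ because $\int |\varphi_0|\, dF < \infty$. The corollary then asserts exactly that $\sup_{\varphi \in \mathcal{F}_0} |W_n(\varphi)| \to 0$ a.s., so I only need to verify the bracketing hypothesis $N_{[~]}(\epsilon, \mathcal{F}_0, L^1(F)) < \infty$ for every $\epsilon>0$.

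First I would introduce the finite measure $\mu$ on $\mathbb{R}$ defined by $d\mu = |\varphi_0|\, dF$. Fix $\epsilon>0$. Since $\mu$ is finite and the distribution function $t \mapsto \mu((-\infty,t])$ is nondecreasing and bounded, one can choose points $-\infty = t_0 < t_1 < \cdots < t_k = +\infty$ such that $\mu((t_{j-1},t_j]) < \epsilon$ for every $j=1,\ldots,k$; only finitely many such points are needed. The atoms of $\mu$ (if any) can each be absorbed inside its own bracket, which is harmless.

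Next, for each $j$ I would construct a bracket containing all functions $\varphi_0 I_{(-\infty,t]}$ with $t \in [t_{j-1},t_j)$. Writing $\varphi_0 = \varphi_0^+ - \varphi_0^-$, define
\[
l_j := \varphi_0^+ I_{(-\infty, t_{j-1}]} - \varphi_0^- I_{(-\infty, t_j]}, \qquad u_j := \varphi_0^+ I_{(-\infty, t_j]} - \varphi_0^- I_{(-\infty, t_{j-1}]}.
\]
For any $t \in [t_{j-1},t_j)$ one checks pointwise that $l_j \le \varphi_0 I_{(-\infty,t]} \le u_j$, and moreover
\[
\int (u_j - l_j)\, dF = \int_{(t_{j-1},t_j]} |\varphi_0|\, dF = \mu((t_{j-1},t_j]) < \epsilon.
\]
Hence $\{[l_j,u_j] : 1 \le j \le k\}$ is a cover of $\mathcal{F}_0$ by finitely many $\epsilon$-brackets in $L^1(F)$, so $N_{[~]}(\epsilon, \mathcal{F}_0, L^1(F)) \le k < \infty$.

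With the bracketing condition verified, Theorem \ref{ulln} applied to $\mathcal{F}_0$ yields the claimed uniform convergence. The only substantive step is the bracket construction above; the main subtlety is handling the sign of $\varphi_0$, which is exactly why the positive/negative decomposition is used when defining $l_j$ and $u_j$. Everything else is routine.
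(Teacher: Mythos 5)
Your proof is correct and takes the same route as the paper, whose entire proof is the single line ``apply Theorem \ref{ulln} to $\mathcal{F}=\{\varphi_0\, I_{(-\infty,t]}:t\in\mathbb{R}\}$.'' The only difference is that you explicitly verify the bracketing hypothesis $N_{[~]}(\epsilon,\mathcal{F}_0,L^1(F))<\infty$ via the positive/negative decomposition of $\varphi_0$, which the paper leaves implicit; your construction is the standard one and is sound (the atom issue you flag is in fact vacuous here, since Assumption A makes $F$ continuous).
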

\begin{proof}
Apply Theorem \ref{ulln} to $\mathcal{F}=\{{\varphi}_0.I(-\infty,t]:t \in \mathbb{R}\}$.
\end{proof}

\subsection{\bf Uniform CLT}

In this subsection we give a CLT for $\mathcal{F}$-indexed empirical processes
$$G_n(\varphi):=\sqrt{n} \int \varphi d(F_n-F), \quad  \varphi\in\mathcal{F}.$$
We assume that  $\mathcal{F} $ is  a class of real-valued measurable functions on $\mathbb{R}$ such that  $\mathcal{F} \subseteq L^2(F):= \{\varphi: \int {\varphi}^2(x) F(dx)< \infty \}.$
 The below assumption is imposed throughout this subsection to achieve Donsker type theorem
for $G_n(\varphi).$ \\


{\bf Assumption B.} $F$ is continuous with $a_G < a_F.$ \\


\begin{theorem}\label{uclt}
Let $\mathcal{F}$ be a class of functions with $J(1)<\infty$, satisfying Assumption
$\textbf{B}$. Then $G_n \Rightarrow W$ as elements of $B(\mathcal{F})$ where $\{W(\varphi): \varphi\in \mathcal{F}\}$ is a Gaussian process with the mean $EW(\varphi)=0$ and the covariance function is given by
\[Cov(W({\varphi}_1),W({\varphi}_2))=Cov(\zeta({\varphi}_1),\zeta({\varphi}_2)),\]
where $\zeta(\varphi)$ given by
\[\zeta(\varphi)=\left(\frac{\psi(Y)}{C(Y)} -\int_T^Y \frac{\psi(y)}{C^2(y)}dF^*(y),  \right)  \]
and
\[\psi(w)=\int_{w<t}[\varphi(w)-\varphi(t) ]dF(t).\]
\end{theorem}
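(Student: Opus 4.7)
The plan is to reduce the problem to a classical empirical-process CLT by establishing an i.i.d.\ representation for $G_n(\varphi)$ that is asymptotically uniform over $\mathcal{F}$, and then applying Ossiander's bracketing CLT to the resulting class of influence functions.

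The first step is to derive the linearization
\begin{equation*}
G_n(\varphi) \;=\; \frac{1}{\sqrt{n}}\sum_{i=1}^n \zeta(Y_i, T_i; \varphi) \;+\; R_n(\varphi),
\end{equation*}
where $\zeta$ is the influence function stated in the theorem and $\sup_{\varphi \in \mathcal{F}} |R_n(\varphi)| \to 0$ in outer probability. This uses the product representation \eqref{lynden estimators} of the Lynden--Bell estimator together with a logarithmic expansion and integration by parts against the cumulative hazard; the randomness in $F_n - F$ is reduced to the empirical perturbations of $C_n - C$ and $F_n^* - F^*$. The formula for $\zeta$ in the statement is precisely what emerges when one groups the contributions from each pair $(Y_i, T_i)$. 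Assumption $\mathbf{B}$, by guaranteeing $\inf_{y \in [a_F, b_F]} C(y) > 0$ through $a_G < a_F$, ensures that all denominators appearing in the expansion are bounded away from zero, so the remainder can be controlled by a product of a uniformly negligible factor (the sup-norm of $C_n - C$ or of a similar half-line-indexed process) times an envelope-dependent quantity.

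The second step is to transfer the bracketing hypothesis from $\mathcal{F}$ to the influence class $\mathcal{G} := \{\zeta(\cdot, \cdot; \varphi) : \varphi \in \mathcal{F}\}$. A Lipschitz bound of the form
\begin{equation*}
\|\zeta(\varphi_1) - \zeta(\varphi_2)\|_{L^2(H^*)} \;\leq\; K\,\|\varphi_1 - \varphi_2\|_{L^2(F)},
\end{equation*}
with $K$ depending only on $\alpha$ and on $\inf C$, converts each $L^2(F)$-bracket for $\mathcal{F}$ into an $L^2(H^*)$-bracket for $\mathcal{G}$, whence $J_{[~]}(1, \mathcal{G}, L^2(H^*)) < \infty$. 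Ossiander's theorem [\ref{Oss}], applied to the i.i.d.\ pairs $(Y_i, T_i) \sim H^*$, then yields weak convergence of the linear term to a Gaussian process $W$ with covariance $E[\zeta(\varphi_1)\,\zeta(\varphi_2)]$, matching the covariance asserted in the theorem. Combined with the negligibility of $R_n$, Slutsky's theorem in $\ell^\infty(\mathcal{F})$ gives $G_n \Rightarrow W$.

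The main obstacle is the uniformity in Step 1. The pointwise i.i.d.\ representation of the Lynden--Bell estimator is classical (compare the one-dimensional CLT underlying \ref{He}), but upgrading the remainder to an $\sup_{\varphi\in\mathcal{F}}$-statement in outer probability requires stochastic-equicontinuity estimates for the bracket-indexed processes that appear in the expansion. These are obtained by combining the envelope of $\mathcal{F}$ with the uniform consistency of $C_n$ on $[a_F, b_F]$ supplied by Assumption $\mathbf{B}$; once this uniformity is in place, the remaining bracketing calculations and the invocation of Ossiander's theorem are routine.
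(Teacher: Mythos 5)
Your proposal follows essentially the same route as the paper's proof: an i.i.d.\ representation $G_n(\varphi)=n^{-1/2}\sum_i\zeta_i(\varphi)+R_n(\varphi)$ with uniformly negligible remainder (the paper imports this from Stute and Wang rather than rederiving it), transfer of the bracketing entropy from $\mathcal{F}$ to the influence class $\mathcal{G}$, and Ossiander's bracketing CLT for the linear term, assembled via Slutsky/asymptotic equicontinuity. One small caution on your second step: since $\varphi\mapsto\zeta(\varphi)$ is not monotone (the kernel $\psi$ contains $\varphi$ with both signs), a Lipschitz bound by itself controls covering rather than bracketing numbers, so, as in the paper's Lemma \ref{kol}, you must construct explicit brackets for $\mathcal{G}$ by substituting $l$ and $u$ separately into the positively and negatively signed occurrences of $\varphi$ in $\zeta$; the computation that bounds the resulting bracket width is then exactly the one behind your Lipschitz estimate.
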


In order to prove Theorem \ref{uclt} we first need two following propositions which their  proofs postpone in Section \ref{pr4}.
\begin{proposition}\label{w1}
Under Assumption $\bold B$ and  condition $ J(1)<\infty,$ finite dimensional distributions of $G_n$ converges to those of $W$.
\end{proposition}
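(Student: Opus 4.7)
The plan is to reduce the finite-dimensional CLT for $G_n$ to a standard one-dimensional CLT for i.i.d.\ sums, via the Cram\'er--Wold device combined with an asymptotic i.i.d.\ linearization of the Lynden-Bell integral. Since $\varphi\mapsto G_n(\varphi)$ is linear, for any $\varphi_1,\dots,\varphi_k\in\mathcal{F}$ and scalars $a_1,\dots,a_k$ one has $\sum_j a_j G_n(\varphi_j)=G_n(\sum_j a_j\varphi_j)$. Because the mapping $\varphi\mapsto\zeta(\varphi)$ is also linear in $\varphi$ (via the linearity of $\psi$ in $\varphi$), the joint Gaussianity of the limit and the stated covariance structure will follow automatically once we prove, for every single $\varphi\in L^2(F)$, that $G_n(\varphi)\Rightarrow N(0,\mathrm{Var}(\zeta(\varphi)))$.

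The central tool is the asymptotic i.i.d.\ representation
\[
G_n(\varphi)=\frac{1}{\sqrt{n}}\sum_{i=1}^{n}\zeta_i(\varphi)+R_n(\varphi),\qquad R_n(\varphi)=o_P(1),
\]
where $\zeta_i(\varphi):=\tfrac{\psi(Y_i)}{C(Y_i)}-\int_{T_i}^{Y_i}\tfrac{\psi(y)}{C^2(y)}\,dF^*(y)$ are i.i.d.\ copies of the mean-zero variable $\zeta(\varphi)$. To derive it, I would start from the product form \eqref{lynden estimators}, pass to logarithms to convert the product into a sum, and Taylor-expand $\log(1-F_n(y))-\log(1-F(y))$ to first order in the increments $C_n-C$ and $F_n^*-F^*$. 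Using the identity $dF/(1-F)=dF^*/C$, which is valid under Assumption \textbf{B} because $a_G<a_F$ makes $C$ strictly positive on the support of $F$, then integrating against $\varphi$ and exchanging the order of integration by Fubini, produces exactly the stated form of $\zeta$; the quadratic residual terms collect into $R_n$. This is the functional-integral analogue of the one-dimensional representation that underlies the CLT of He and Yang [\ref{He}].

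Granted this representation, the one-dimensional convergence $G_n(\varphi)\Rightarrow N(0,\sigma^2(\varphi))$ with $\sigma^2(\varphi)=\mathrm{Var}(\zeta(\varphi))$ is immediate from the classical Lindeberg--L\'evy CLT for i.i.d.\ summands, and the Cram\'er--Wold device together with the bilinearity of $(\varphi_1,\varphi_2)\mapsto\mathrm{Cov}(\zeta(\varphi_1),\zeta(\varphi_2))$ delivers the full finite-dimensional convergence. It is worth noting that the bracketing integral condition $J(1)<\infty$ is not actually used at this step; it only enters the companion tightness argument needed to upgrade the finite-dimensional convergence to weak convergence in $B(\mathcal{F})$.

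The main obstacle I anticipate is the uniform control of the remainder $R_n(\varphi)$ in the Taylor expansion. One needs almost-sure uniform closeness of $C_n$ to $C$ and of $F_n^*$ to $F^*$ on an interval $(-\infty,\tau]$ with $\tau<b_F$, together with the integrability of $\psi/C^2$ against $dF^*$ on that range. Assumption \textbf{B} is precisely what keeps $C$ bounded away from zero on the effective support and hence makes the higher-order terms negligible; the relevant Glivenko--Cantelli and rate estimates (for $C_n$ and $F_n^*$) are already available in the truncation literature and can be imported rather than reproved.
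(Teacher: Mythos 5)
Your argument is essentially the paper's: both reduce the finite-dimensional convergence to the i.i.d.\ representation $G_n(\varphi)=n^{-1/2}\sum_i\zeta_i(\varphi)+\sqrt{n}R_n(\varphi)$ with $\sqrt{n}R_n(\varphi)=o_P(1)$, then apply the one-dimensional CLT, Slutsky, and the Cram\'er--Wold device via linearity of $\varphi\mapsto\zeta(\varphi)$. The only difference is that you sketch a derivation of the representation by Taylor-expanding the Lynden-Bell product, whereas the paper simply imports it (as Lemma \ref{gha}) from Stute and Wang [\ref{stut}]; your side remark that $J(1)<\infty$ is not really needed here matches the paper's own Remark to that effect.
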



\begin{remark}
Proposition \ref{w1} is correct under weak assumptions
\begin{eqnarray*}
& 1.&a_G \leq a_F \quad and \quad F\{a_F\}=0, \\
 &and&\\
 &2.& \int \frac{dF}{G}<\infty \quad  and \quad\int \frac{{\varphi}^2}{G}dF<\infty,\quad for\quad every \quad\varphi \in \mathcal{F}.
 \end{eqnarray*}
\end{remark}
In order to show the uniform CLT, we need to prove that the integral  process
\[G_n(\varphi)=\sqrt{n} \int \varphi d(F_n-F) ~~~ for ~~~\varphi\in\mathcal{F}\]
 is tight in the space of bounded functions acting on the class $\mathcal{F}$
\begin{proposition} \label{tight}
Let $J(1)<\infty.$ Then under Assumption  $\bold B,$  the process  $\{ G_n(\varphi): \varphi \in \mathcal{F} \}$  is asymptotically continuous:   for every  $\epsilon>0$,
\begin{eqnarray*}\lim_{\delta \downarrow 0} \limsup_{n\rightarrow \infty}  P^* \left\{ \sup_{d({\varphi}_1,{\varphi}_2)<\delta} |G_n({\varphi}_1)-G_n({\varphi}_2)| >\epsilon \right\} =0,
\end{eqnarray*}
where \[d({\varphi}_1,{\varphi}_2)=\left[\int({\varphi}_1-{\varphi}_2)^2dF \right]^{1/2}, \quad for ~ {\varphi}_1,{\varphi}_2 \in \mathcal{F}.\]
\end{proposition}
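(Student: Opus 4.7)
The plan is to reduce tightness of $G_n$ to tightness of a canonical i.i.d.\ empirical process via a Stute-type linearization of the Lynden-Bell functional. Under Assumption $\textbf{B}$, the condition $a_G<a_F$ guarantees $\inf_{y\in[a_F,b_F]}C(y)=C(a_F)>0$, so that $1/C$ (and eventually $1/C_n$) is uniformly bounded on the relevant range, which is what allows the auxiliary function classes below to have the right integrability. The first step is to establish the uniform-in-$\varphi$ representation
$$G_n(\varphi) \;=\; \mathbb{G}_n\zeta(\varphi) \;+\; R_n(\varphi),\qquad \mathbb{G}_n\zeta(\varphi)=\frac{1}{\sqrt{n}}\sum_{i=1}^n \zeta(\varphi;Y_i,T_i),$$
with $\zeta$ as in Theorem~\ref{uclt}. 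I would derive this by combining the standard Duhamel expansion for $F_n-F$ in the left-truncation model with Fubini, tracking the error terms as in the one-dimensional CLT under RLTM.

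The second step is asymptotic equicontinuity of $\varphi\mapsto \mathbb{G}_n\zeta(\varphi)$, which I would obtain from Ossiander's theorem applied to the class $\widetilde{\mathcal{F}}:=\{\zeta(\varphi;\cdot,\cdot):\varphi\in\mathcal{F}\}$ regarded as a class of functions of $(Y,T)$ under $H^{*}$. The key technical point is a bracket comparison: given an $L^2(F)$-bracket $[l,u]$ for $\varphi$ with $\|u-l\|_{L^2(F)}<\epsilon$, the linear dependence of $\zeta$ on $\varphi$ (through $\psi$) together with the uniform bounds on $1/C$ and $1/C^2$ produces an explicit $L^2(H^{*})$-bracket for $\zeta(\varphi;\cdot,\cdot)$ of size at most a constant multiple of $\epsilon$. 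This yields
$$J_{[\,]}\!\bigl(1,\widetilde{\mathcal{F}},\|\cdot\|_{L^2(H^{*})}\bigr) \;\lesssim\; J(1) \;<\;\infty,$$
so Ossiander's CLT delivers asymptotic equicontinuity in the pseudometric $\|\zeta(\varphi_1)-\zeta(\varphi_2)\|_{L^2(H^{*})}$, which by the same bracket comparison is dominated by $d(\varphi_1,\varphi_2)$.

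The third step is to show $\sup_{\varphi\in\mathcal{F}}|R_n(\varphi)|=o_{P^{*}}(1)$. After linearization, $R_n(\varphi)$ is a finite sum of terms built from the differences $1/C_n-1/C$ and $F_n^{*}-F^{*}$, $G_n^{*}-G^{*}$, each multiplied by an empirical functional of $\varphi$. Using the almost-sure uniform rate $\sup_y|C_n(y)-C(y)|=O(\sqrt{n^{-1}\log\log n})$ on intervals bounded away from $a_F$, together with the lower bound on $C$, every such term factors into an $o(1)$ deterministic prefactor times a stochastic factor that must be bounded uniformly in $\varphi$. The latter is controlled by invoking, once, the maximal inequality underlying Ossiander's theorem for the auxiliary class, which is applicable precisely because $J(1)<\infty$. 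Chaining the pointwise Stute-type estimates along dyadic $L^2(F)$-brackets $[l_k,u_k]$ then lifts the bound to a supremum.

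I expect the principal obstacle to be step three, promoting the pointwise remainder estimate to a uniform bound over $\mathcal{F}$. For a single $\varphi$ the representation is classical, but the uniform version requires careful envelope selection for the auxiliary classes (involving $\psi$, $1/C$ and $1/C^2$) so that Bernstein-type maximal inequalities apply, and it relies on the fact that the differences $C_n-C$ and $F_n^{*}-F^{*}$ can be handled as standard empirical processes on the observable sample. A secondary delicate point is the bracket comparison in step two, specifically controlling the $L^2(H^{*})$-norm of differences of the form $\int_T^Y (\psi_1-\psi_2)/C^2\,dF^{*}$ by $\|\varphi_1-\varphi_2\|_{L^2(F)}$; this uses the continuity of $F^{*}$ and the uniform bound on $1/C$ implied by Assumption $\textbf{B}$.
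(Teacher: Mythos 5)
Your proposal follows essentially the same route as the paper: linearize $G_n(\varphi)$ via the Stute--Wang representation $G_n(\varphi)=U_n(\varphi)+\sqrt{n}R_n(\varphi)$ with $U_n(\varphi)=n^{-1/2}\sum_i\zeta_i(\varphi)$, apply Ossiander's bracketing CLT to the transformed class $\{\zeta(\varphi):\varphi\in\mathcal{F}\}$ by showing that an $L^2(F)$-bracket of size $\epsilon$ for $\varphi$ induces a bracket of size $O(\epsilon)$ for $\zeta(\varphi)$ (using the lower bound on $C$ coming from $a_G<a_F$), and dispose of the remainder uniformly. The only difference is that you sketch a genuine chaining argument for $\sup_\varphi|R_n(\varphi)|=o_{P^*}(1)$, whereas the paper simply cites Remark 1.1 of Stute and Wang together with its Lemma 6 — your treatment of that step is, if anything, more careful than the paper's.
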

\begin{proof}[\textbf{Proof of Theorem \ref{uclt}}]
At first, we notice that from $J(1)<\infty,$ one can conclude the total boundedness of the metric space $(\mathcal{F}, d).$ Now, Proposition \ref{w1}, Proposition  \ref{tight} and  Pollard [\ref{pollard}, Theorem  10.2], complete the proof.
\end{proof}
\section{\bf Proofs}\label{pr4}
 Below we mention some lemmas that are used in the proofs of the main theorems. The proof of the following Lemma \ref{gha} and Lemma \ref{cltstute} appear in Stute and Wang [\ref{stut}].
\begin{lemma}\label{gha}
Let Assumption \textbf{B} is satisfied. Then under condition $J(1)<\infty,$   we have
\begin{eqnarray}
\sqrt{n} \int \varphi d(F_n-F)=n^{-1/2}\sum_{i=1}^n \zeta_i(\varphi) +\sqrt{n}R_n(\varphi) . \nonumber
\end{eqnarray}
where   \[n^{1/2} R_n(\varphi)=o_{p}(1). \]
and  $\zeta_i(\varphi)$ are i.i.d. copies of the random variables $\zeta(\varphi)$.
\end{lemma}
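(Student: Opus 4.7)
The plan is to follow the Stute--Wang i.i.d.\ representation strategy for the Lynden--Bell estimator. The starting point is to convert the product-limit formula (4) into a cumulative-hazard form: taking logarithms and applying $\log(1-x) = -x + O(x^2)$ to each factor yields
\[ -\log(1 - F_n(y)) = \Lambda_n(y) + O(1/n), \qquad \Lambda_n(y) := \int_{-\infty}^{y} \frac{dF_n^*(u)}{C_n(u)}, \]
where the $O(1/n)$ bound is uniform on compacts of $[a_F, b_F]$ because $C_n$ stays away from zero there, by Assumption $\textbf{B}$ and the Glivenko--Cantelli theorem for $C_n$. Continuity of $F$ together with $a_G < a_F$ gives the analogue $1 - F(y) = \exp(-\Lambda(y))$ with $\Lambda(y) = \int_{-\infty}^y dF^*(u)/C(u)$.

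Next I would linearize the survival difference using $e^{-a} - e^{-b} = -e^{-b}(a-b) + O((a-b)^2)$, and expand the hazard difference around the empirical pieces,
\[ \Lambda_n(y) - \Lambda(y) = \int_{-\infty}^{y} \frac{d(F_n^* - F^*)(u)}{C(u)} - \int_{-\infty}^{y} \frac{C_n(u) - C(u)}{C^2(u)}\, dF^*(u) + r_n(y). \]
Combining these two expansions writes $F_n - F$ as the sum of two terms that are linear in the empirical measures of $(Y_i,T_i)$, plus a remainder that is quadratic in the Glivenko--Cantelli errors of $F_n^*$ and $C_n$.

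The third step is to integrate against $\varphi$, apply Fubini to swap order of integration in each of the two leading pieces, and recognize the result as an arithmetic mean of i.i.d.\ functionals of $(Y_i,T_i)$. The bookkeeping collapses naturally: the first piece contributes $n^{-1}\sum \psi(Y_i)/C(Y_i)$, and the second contributes $-n^{-1}\sum \int_{T_i}^{Y_i} \psi(y)/C^2(y)\, dF^*(y)$, which is exactly the stated $\zeta(\varphi)$ with $\psi(w) = \int_{w<t}[\varphi(w) - \varphi(t)]\, dF(t)$.

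The main obstacle is showing $\sqrt{n}\, R_n(\varphi) = o_p(1)$ for the collected remainder. This reduces to (i) the deterministic $O(1/n)$ error from the $\log$--$\exp$ expansion, which contributes $O(n^{-1/2})$; (ii) the integrated quadratic term $\int \varphi\, S\, (\Lambda_n - \Lambda)^2$, which is handled by a uniform bound on $(C_n - C)/C$ (guaranteed by $a_G < a_F$ and the Glivenko--Cantelli theorem for $C_n$) combined with $\varphi \in L^2(F)$; and (iii) a negligible boundary term from the integration by parts, controlled since $F(a_F) = 0$ under Assumption $\textbf{B}$. Putting these together gives the claimed rate, and the i.i.d.\ structure of $\{(Y_i,T_i)\}$ identifies the summands $\zeta_i(\varphi)$ as i.i.d.\ copies of $\zeta(\varphi)$.
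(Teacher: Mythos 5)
The paper does not actually prove this lemma: it is imported verbatim from Stute and Wang (their i.i.d.\ representation for Lynden--Bell integrals), and the remark that follows it in Section 3 merely records the weaker hypotheses under which their result holds ($a_G\le a_F$, $F\{a_F\}=0$, $\int dF/G<\infty$, $\int \varphi^2/G\,dF<\infty$). Your outline correctly reconstructs the Stute--Wang strategy --- cumulative-hazard representation of the product-limit estimator, linearization of $\Lambda_n-\Lambda$ in the empirical measures, Fubini to collapse the two leading terms into the mean of the i.i.d.\ summands $\psi(Y_i)/C(Y_i)-\int_{T_i}^{Y_i}\psi/C^2\,dF^*$ --- so in spirit you are following the same route the paper delegates to its reference.

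The genuine gap is in the remainder analysis, which is exactly the hard part of that reference. Your control of $R_n$ rests on two uniformity claims that fail on the actual range of integration. First, the $O(1/n)$ error in $\log(1-x)=-x+O(x^2)$ is asserted ``uniformly on compacts of $[a_F,b_F]$,'' but $\int\varphi\,d(F_n-F)$ extends to $b_F$, where $C(y)=\alpha^{-1}G(y)\bigl(1-F(y^-)\bigr)\to 0$; the cumulative hazard diverges there, and the factors $1/(nC_n(Y_i))$ for the largest order statistics are not $O(1/n)$ uniformly. Second, $(C_n-C)/C$ is not uniformly bounded in probability near $b_F$ for the same reason, so the quadratic term cannot be dispatched by ``uniform bound times $\varphi\in L^2(F)$.'' Assumption \textbf{B} ($a_G<a_F$) neutralizes only the lower boundary: it makes $G$ bounded away from zero on the support of $F$, so the Stute--Wang integrability conditions reduce to $\varphi\in L^2(F)$, but it does nothing at the upper endpoint. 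The actual proof in Stute and Wang handles the remainder through a U-statistic/projection decomposition with a separate, delicate treatment of the boundary region; a quadratic Glivenko--Cantelli bound of the kind you sketch does not yield $\sqrt{n}\,R_n(\varphi)=o_p(1)$. If you mean to reprove the lemma rather than cite it, that is the step that needs a real argument.
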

\begin{lemma} \label{cltstute}
 Under Assumption Lemma \ref{gha}, we have
\[\sqrt{n} \int \varphi d(F_n-F_0) \rightarrow N(0,\sigma^2)\]
with
\[\sigma^2=Var\left(\frac{\psi(X)}{C(X)} -\int_Y^X \frac{\psi(y)}{C^2(y)}dF^*(y)  \right)  \].
\end{lemma}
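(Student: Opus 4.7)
The plan is to reduce Lemma \ref{cltstute} to the classical one-dimensional central limit theorem for i.i.d.\ summands by invoking the Hájek-type linearization in Lemma \ref{gha}. First, under Assumption \textbf{B} we have $a_G < a_F$, so as noted in the introduction $F = F_0$; hence $\sqrt{n}\int\varphi\,d(F_n-F_0) = \sqrt{n}\int\varphi\,d(F_n-F)$, and it suffices to study the latter quantity. Lemma \ref{gha} gives the representation
\[
\sqrt{n}\int\varphi\,d(F_n-F) \;=\; n^{-1/2}\sum_{i=1}^{n}\zeta_i(\varphi) \;+\; \sqrt{n}\,R_n(\varphi),
\qquad \sqrt{n}\,R_n(\varphi)=o_{P}(1),
\]
where $\zeta_i(\varphi)$ are i.i.d.\ copies of $\zeta(\varphi)$.

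Next I would verify the two ingredients needed to apply the ordinary CLT to $n^{-1/2}\sum_{i=1}^n\zeta_i(\varphi)$. The first is that $E\zeta(\varphi)=0$, which reduces to a Fubini-style rearrangement of the two terms defining $\zeta$: expectation under the conditional law $H^{*}$ of $(Y,T)\mid Y\ge T$, combined with the identity $C(y)=\alpha^{-1}G(y)(1-F(y^{-}))$ from \eqref{C(y)}, should make the integral $E\bigl[\int_{T}^{Y}\psi(y)/C^{2}(y)\,dF^{*}(y)\bigr]$ telescope with $E[\psi(Y)/C(Y)]$. The second is that $\mathrm{Var}(\zeta(\varphi))=\sigma^{2}<\infty$; this is automatic once one rewrites $\psi$ in terms of $F$ and uses the condition $\varphi\in L^{2}(F)$, together with the positivity of $C$ on $[a_F,b_F]$ guaranteed by $a_G<a_F$ (so $G>0$ on the support of $F$). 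With mean zero and finite variance in hand, the i.i.d.\ CLT yields
\[
n^{-1/2}\sum_{i=1}^{n}\zeta_i(\varphi)\ \xrightarrow{d}\ N(0,\sigma^{2}).
\]

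Finally, a direct application of Slutsky's theorem combines this with the $o_P(1)$ remainder $\sqrt{n}R_n(\varphi)$ from Lemma \ref{gha} to give
\[
\sqrt{n}\int\varphi\,d(F_n-F_0)\ \xrightarrow{d}\ N(0,\sigma^{2}),
\]
as claimed.

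The main obstacle I anticipate is the mean-zero verification: the formula for $\zeta(\varphi)$ mixes a pointwise evaluation $\psi(Y)/C(Y)$ with an integral along $[T,Y]$ against $dF^{*}$, and one needs a careful change of order of integration under $H^{*}$ to see that the two terms cancel in expectation. Once this identity is established, everything else is either a direct citation of Lemma \ref{gha} or a routine CLT + Slutsky argument, so the computation of $E\zeta(\varphi)$ (together with the explicit form of $\sigma^2$) is really where the work lies.
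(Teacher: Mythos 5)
The paper offers no proof of this lemma at all---it simply states that the proof ``appears in Stute and Wang [\ref{stut}]''---and your outline (reduce $F_0$ to $F$ via $a_G<a_F$, apply the i.i.d.\ representation of Lemma \ref{gha}, then the classical CLT plus Slutsky) is exactly the argument that reference carries out, so you are on the same route as the source the paper delegates to. The one piece you describe but do not execute, namely the verification that $E\zeta(\varphi)=0$ and $\mathrm{Var}(\zeta(\varphi))<\infty$, is precisely the content established in Stute and Wang, and your sketch of how it goes (a Fubini rearrangement under $H^{*}$ using $C(y)=\alpha^{-1}G(y)(1-F(y^{-}))$, together with $\varphi\in L^{2}(F)$ and the fact that $G$ is bounded away from $0$ on the support of $F$ when $a_G<a_F$) is the correct one.
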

\begin{remark}
Lemma \ref{gha} and \ref{cltstute} are correct under weak following assumptions:\\
\textbf{1}. $a_G \leq a_F$ and $F\{a_F\}=0,$\\
\textbf{2}. $\int \frac{dF}{G}<\infty$ and $\int \frac{{\varphi}^2}{G} dF<\infty,$ for every $\varphi \in \mathcal{F}. $
\end{remark}
Write $U_n(\varphi)=n^{-1/2}\sum_{i=1}^n \zeta_i(\varphi)$ and $\mathcal{G}=\{ \zeta(\varphi):~\varphi\in \mathcal{F} \}$
\begin{proof}[\textbf{Proof of Proposition   \ref{w1}}.]
First note that by Lemma \ref{gha}, one can rewrite $G_n(\varphi)$ as
 $$G_n(\varphi)=U_n(\varphi)+\sqrt{n}R_n(\varphi),\quad for \quad \varphi \in \mathcal{F}.$$ In order to get the one dimensional central limit theorem for $G_n,$ we use Lemma \ref{cltstute} and Slutsky Theorem foe each fixed $\varphi \in \mathcal{F}.$ the  Cramer Wold device implies the result.
\end{proof}
To prove Proposition \ref{tight} we need the following Lemmas.
\begin{lemma}\label{g1}
Suppose that $J(1)<\infty,$ then we have $\int{\Phi}^2(x)dF(x)<\infty.$
\end{lemma}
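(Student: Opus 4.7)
The plan is to use the basic fact that a finite bracketing entropy integral guarantees finitely many brackets at scale $1$, which then forces the envelope to be dominated by a sum of $L^2(F)$ functions. First, I would observe that $J(1)<\infty$ forces $N_{[\,]}(\epsilon,\mathcal{F},L^2(F))<\infty$ for \emph{every} $\epsilon>0$: the integrand $\sqrt{\log N_{[\,]}(\cdot)}$ is non-increasing, so if $N_{[\,]}(\epsilon_0)=\infty$ for some $\epsilon_0$, the integrand would be $+\infty$ on $(0,\epsilon_0]$, contradicting $J(1)<\infty$. In particular, $N:=N_{[\,]}(1,\mathcal{F},L^2(F))<\infty$.

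Next, I would select $1$-brackets $[l_1,u_1],\ldots,[l_N,u_N]$ whose union covers $\mathcal{F}$, with $l_i,u_i\in L^2(F)$ and $\|u_i-l_i\|_{L^2(F)}<1$. For any $\varphi\in\mathcal{F}$, one of these brackets contains $\varphi$, so $|\varphi(x)|\le |l_i(x)|\vee|u_i(x)|$. Taking the supremum over $\varphi$ yields the pointwise bound
\[
\Phi(x)=\sup_{\varphi\in\mathcal{F}}|\varphi(x)|\;\le\;\max_{1\le i\le N}\bigl(|l_i(x)|+|u_i(x)|\bigr)\;\le\;\sum_{i=1}^{N}\bigl(|l_i(x)|+|u_i(x)|\bigr).
\]
Since each $l_i$ and $u_i$ lies in $L^2(F)$ by the very definition of an $L^2$-bracket, the right-hand side is an $L^2(F)$ function. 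Squaring and integrating, I conclude $\int \Phi^2\,dF<\infty$.

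There is essentially no obstacle here — the whole content is that the definition of bracketing number in $L^2(F)$ supplies $L^2(F)$ bounds for the envelope automatically. The only point requiring a small word of care is the initial reduction from ``$J(1)<\infty$'' to ``$N_{[\,]}(1)<\infty$,'' which follows from monotonicity of the integrand.
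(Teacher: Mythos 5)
Your argument is correct and is essentially the paper's own proof: both reduce $J(1)<\infty$ to $N_{[\,]}(1)<\infty$ and then dominate the envelope by $\sum_i\bigl(|l_i|+|u_i|\bigr)$ with the bracket endpoints in $L^2(F)$. Your version just spells out the monotonicity step ($N_{[\,]}(1)=\infty$ would force the integrand to be infinite on $(0,1]$) that the paper leaves implicit.
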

\begin{proof}
Similar to the proof of Lemma 1 Bae and Kim [\ref{Bae}], from $N_{[]}(1)<\infty$ and $$\Phi(\cdot)\leq \sum_{i=0}^{N_{[]}(1)}\left(|l_{i}(\cdot)|+|u_{i}(\cdot)|\right),$$ we can conclude the square integrability of $\Phi$.
\end{proof}
\begin{lemma} \label{dav}
Suppose $J(1)<\infty$ and $a_G <a_F,$ then
\begin{eqnarray*}
\int\frac{dF}{G}<\infty \quad and \quad
 \int \frac{{\Phi}^2}{G}dF<\infty.
\end{eqnarray*}
\end{lemma}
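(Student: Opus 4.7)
The plan is to exploit the gap $a_G < a_F$ to bound $1/G$ by a finite constant on the effective domain of integration, and then reduce both claims to finiteness statements that are either trivial or already established.

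First I would unpack the geometric meaning of $a_G < a_F$. By the definition of $a_G = \inf\{x : G(x) > 0\}$, picking any $x_0 \in (a_G, a_F)$ gives $G(x_0) > 0$. Monotonicity of $G$ then forces $G(x) \ge G(x_0) > 0$ for every $x \ge a_F$. Since $F$ is concentrated on $[a_F, b_F]$, this yields the key pointwise bound
\begin{equation*}
\frac{1}{G(x)} \;\le\; \frac{1}{G(x_0)} \;<\; \infty \qquad \text{for $F$-almost every } x.
\end{equation*}

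With this bound in hand, the first conclusion follows immediately:
\begin{equation*}
\int \frac{dF}{G} \;\le\; \frac{1}{G(x_0)} \int dF \;=\; \frac{1}{G(x_0)} \;<\; \infty.
\end{equation*}
For the second, the same bound gives $\int \Phi^2/G \, dF \le G(x_0)^{-1} \int \Phi^2 \, dF$, and Lemma \ref{g1} (which requires only $J(1) < \infty$) supplies $\int \Phi^2 \, dF < \infty$, closing the argument.

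There is no real obstacle here; the one place to be careful is simply invoking the definition of $a_G$ correctly to produce a strictly positive lower bound for $G$ on $[a_F, b_F]$, rather than merely a nonnegative one. The strict inequality $a_G < a_F$ is essential: under the weaker hypothesis $a_G \le a_F$ one could have $G(a_F) = 0$ and the bound would collapse, which explains why this lemma needs Assumption \textbf{B} rather than the weaker conditions listed in the preceding remark.
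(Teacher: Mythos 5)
Your proof is correct and is essentially the same argument as the paper's, which simply asserts that the conclusions follow from $\int \Phi^2\,dF<\infty$ (Lemma \ref{g1}) together with $a_G<a_F$; you have merely spelled out the key step, namely that $G$ is bounded below by $G(x_0)>0$ on the support of $F$ for any $x_0\in(a_G,a_F)$, so $1/G$ is bounded $F$-almost everywhere.
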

\begin{proof}
From $J(1)<\infty$ and Lemma \ref{g1}, follows $\int {\Phi(x)}^2 dF(x)<\infty.$  From this fact and $a_G <a_F$, one can write $\int \frac{dF}{G}<\infty$ and $\int \frac{{\Phi}^2}{G} dF<\infty.$
\end{proof}
\begin{lemma} \label{tightR}
Under Assumption $\bold B$ and condition  $J(1)<\infty
 ,$ for every $ \epsilon>0$  we have
\[ \limsup_{n\rightarrow\infty} P^*(\sqrt{n} ||R_n||_{\mathcal{F}} >\epsilon ) =0  \]
that is
$\{ \sqrt{n}R_n(\varphi) : \varphi \in \mathcal{F} \}$
is tight.
\end{lemma}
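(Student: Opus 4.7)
The plan is to combine the Stute--Wang pointwise expansion (Lemma \ref{gha}) with an $L^2(F)$-bracketing argument. Since $J(1) < \infty$ provides a finite $\delta$-bracketing of $\mathcal{F}$ for every $\delta > 0$, one can reduce uniform control over $\mathcal{F}$ to control on a finite skeleton plus bracket-wise oscillations, which is the standard template for bracketing-based tightness proofs (as in Bae and Kim [\ref{Bae}]).

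Concretely, fix $\epsilon, \eta > 0$ and, invoking $J(1) < \infty$, choose $\delta > 0$ together with a finite $\delta$-bracketing $\{[l_i, u_i]\}_{i=1}^{N}$ of $\mathcal{F}$ in $L^2(F)$ with $\|u_i - l_i\|_{L^2(F)} < \delta$. Rewriting Lemma \ref{gha} as $\sqrt{n}R_n(\varphi) = G_n(\varphi) - U_n(\varphi)$, where both processes are linear in $\varphi$, the pointwise conclusion $\sqrt{n}R_n(l_i) = o_p(1)$ applied to each of the finitely many $l_i$ gives
\[
\max_{1 \leq i \leq N} |\sqrt{n}R_n(l_i)| = o_p(1).
\]
For $\varphi \in [l_i, u_i]$, linearity yields
\[
|\sqrt{n}R_n(\varphi)| \leq |\sqrt{n}R_n(l_i)| + |U_n(\varphi - l_i)| + |G_n(\varphi - l_i)|,
\]
so it remains to control the two oscillation terms uniformly in $i$ and in $\varphi \in [l_i, u_i]$. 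The $U_n$ oscillation is handled by Ossiander's bracketing maximal inequality applied to the induced class $\mathcal{G} = \{\zeta(\varphi) : \varphi \in \mathcal{F}\}$; here one verifies that $\varphi \mapsto \zeta(\varphi)$ is Lipschitz from $L^2(F)$ into $L^2(H^*)$ using the explicit form of $\zeta$, Assumption \textbf{B}, and Lemma \ref{dav}, so $\mathcal{G}$ inherits a bracketing entropy integral from $\mathcal{F}$ and the maximal inequality yields an $O(\delta)$-type bound.

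The $G_n$ oscillation is the main obstacle. The plan is to unfold the Stute--Wang representation of the Lynden--Bell remainder and write $\sqrt{n}R_n(\varphi - l_i)$ as a sum of products, each of the form (a) a $\sqrt{n}$-consistent empirical deviation $\sqrt{n}(C_n - C)$, $\sqrt{n}(F_n^* - F^*)$, or $\sqrt{n}(G_n^* - G^*)$, which is $O_p(1)$ uniformly on the relevant range, times a factor decaying at rate $n^{-1/2}$, combined with (b) a $\varphi$-dependent integral dominated via Cauchy--Schwarz by $\|\varphi - l_i\|_{L^2(F)} \cdot \|\Phi\|_{L^2(F)}$, finite by Lemma \ref{g1}. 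This produces a uniform $O_p(\delta)$ bound on the $G_n$ oscillation. Letting first $n \to \infty$ (to kill the pointwise $o_p(1)$ contribution and absorb $O_p(n^{-1/2})$ factors) and then $\delta \downarrow 0$, one concludes $\limsup_n P^*(\sqrt{n}\|R_n\|_{\mathcal{F}} > \epsilon) = 0$. The principal difficulty lies in carrying out this factoring cleanly: the nonlinearity of the Lynden--Bell estimator generates cross-terms in the remainder that must be re-expanded until only products of $O_p(n^{-1/2})$ factors remain, and one must verify that the implicit constants are uniform over $\mathcal{F}$, which is precisely where the envelope bound $\Phi \in L^2(F)$ from Lemma \ref{g1} and the truncation-boundary condition $a_G < a_F$ intervene.
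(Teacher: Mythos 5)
The core of this lemma is a single claim: that the Stute--Wang remainder is $o_p(n^{-1/2})$ \emph{uniformly} over $\mathcal{F}$, not merely for each fixed $\varphi$. Your proposal circles around this claim but never establishes it: the paragraph on the $G_n$ oscillation explicitly defers the decisive step (``the principal difficulty lies in carrying out this factoring cleanly''), and that step is the entire content of the lemma. The paper's own proof is a one-line citation: Remark 1.1 of Stute and Wang [\ref{stut}] already bounds $R_n(\varphi)$ by products of empirical deviations times integrals in which $\varphi$ enters only through quantities such as $\int |\varphi|\, dF/G$ and $\int \varphi^2\, dF/G$; replacing $\varphi$ by the envelope $\Phi$ and invoking Lemma \ref{dav} (which supplies $\int dF/G<\infty$ and $\int \Phi^2\, dF/G<\infty$ from $J(1)<\infty$ and $a_G<a_F$) turns the pointwise $o_p(n^{-1/2})$ bound into a uniform one immediately. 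No bracketing skeleton, no maximal inequality for $U_n$, and no $\delta\downarrow 0$ limit are needed.

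Beyond being unnecessary, your bracketing superstructure creates two concrete difficulties. First, the term $|G_n(\varphi - l_i)|$ in your triangle inequality is an oscillation of the process $G_n$ itself, and uniform control of such oscillations is precisely what Proposition \ref{tight} is trying to prove \emph{using} this lemma; unless you control $G_n(\varphi-l_i)$ by returning to the remainder representation --- in which case the skeleton buys nothing, since the identical argument applied to $\varphi$ directly already yields the conclusion --- the argument is circular. Second, $\varphi - l_i$ need not belong to $\mathcal{F}$, so Lemma \ref{gha} does not apply to it as stated, and the integrability hypotheses would have to be re-verified for the differences. The repair is simply to discard the skeleton and prove (or cite) the envelope-dominated form of the remainder bound directly, which is what the paper does.
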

\begin{proof}
From Remark $1.1$ of Stute and Wang [\ref{stut}] and Lemma \ref{dav}, one can write  ${||R_{n}||}_{\mathcal{F}}=o_{p}(n^{-1/2}).$
\end{proof}
\begin{lemma} \label{kol}
Let $J(1)<\infty,$ then under Assumption $\bold B$
\begin{eqnarray*}
\int_{0}^{1} {\left[\log N_{[]} (\epsilon,\mathcal{G},d)\right]}^{1/2}d\epsilon<\infty
\end{eqnarray*}
holds, thus Theorem 3.1 and Theorem 3.3. of Ossiander [\ref{Oss}] can be applied to the process $\{U_n(\varphi):\varphi \in \mathcal{F}\}.$
\end{lemma}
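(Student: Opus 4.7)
The plan is to transfer bracketing from $\mathcal{F}$ in $L^{2}(F)$ to $\mathcal{G}$ in $L^{2}(H^{*})$, where $H^{*}$ is the joint law of the observed $(Y,T)$ under which $U_n$ is a standard iid empirical process. I would show that each $\epsilon$-bracket $[l,u]$ of $\mathcal{F}$ induces an explicit $K\epsilon$-bracket of $\mathcal{G}$, with a constant $K$ depending only on $\alpha$ and $G(a_F)$. This immediately yields
\[
N_{[~]}(K\epsilon,\mathcal{G},L^{2}(H^{*}))\leq N_{[~]}(\epsilon,\mathcal{F},L^{2}(F)),
\]
and the change of variables $\delta=K\epsilon$ in the entropy integral gives $J_{\mathcal{G}}(1)\leq K\,J(1)<\infty$, after which Ossiander's Theorems 3.1 and 3.3 apply directly to $U_n$ indexed by $\mathcal{G}$.

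The bracket construction exploits the linearity of $\varphi\mapsto\zeta(\varphi)$. Rewriting $\psi(w)=\varphi(w)(1-F(w))-\int_{w}^{\infty}\varphi(t)\,dF(t)$, for any $l\leq\varphi\leq u$ one has the pointwise bounds
\[
\psi^{-}(w):=l(w)(1-F(w))-\int_{w}^{\infty}u(t)\,dF(t)\leq\psi(w)\leq u(w)(1-F(w))-\int_{w}^{\infty}l(t)\,dF(t)=:\psi^{+}(w).
\]
Since $C>0$ and $F^{*}$ is a positive measure, a natural bracket for $\zeta$ is
\[
\underline{\zeta}(y,t):=\frac{\psi^{-}(y)}{C(y)}-\int_{t}^{y}\frac{\psi^{+}(s)}{C^{2}(s)}\,dF^{*}(s),\quad \overline{\zeta}(y,t):=\frac{\psi^{+}(y)}{C(y)}-\int_{t}^{y}\frac{\psi^{-}(s)}{C^{2}(s)}\,dF^{*}(s),
\]
with $\underline{\zeta}(Y,T)\leq\zeta(\varphi)\leq\overline{\zeta}(Y,T)$. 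Setting $\Delta=u-l$ and $D(w):=\psi^{+}(w)-\psi^{-}(w)=\Delta(w)(1-F(w))+\int_{w}^{\infty}\Delta(t)\,dF(t)$, the bracket width equals $D(Y)/C(Y)+\int_{T}^{Y}D(y)/C^{2}(y)\,dF^{*}(y)$.

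To control this width in $L^{2}(H^{*})$ I would use $dF^{*}/C^{2}=\alpha\,dF/[G(1-F^{-})^{2}]$ together with the factorization $D(w)/(1-F(w))=\Delta(w)+(H\Delta)(w)$, where the Hardy operator $(H\Delta)(w):=(1-F(w))^{-1}\int_{w}^{\infty}\Delta(t)\,dF(t)$ satisfies $\|H\Delta\|_{L^{2}(F)}\leq 2\|\Delta\|_{L^{2}(F)}$. Combined with $G\geq G(a_F)>0$ from Assumption \textbf{B}, the first summand is bounded by a constant multiple of $\|\Delta\|_{L^{2}(F)}$. For the second summand I would expand the square and apply Fubini together with the covariance identity
\[
P^{H^{*}}(T\leq y_{1}\wedge y_{2},\;y_{1}\vee y_{2}\leq Y)=\alpha^{-1}G(y_{1}\wedge y_{2})(1-F((y_{1}\vee y_{2})^{-}));
\]
after the $(1-F)$-factors in $D$ cancel against those in $1/C^{2}$, the resulting nested integral collapses into $\int K\cdot(HK)\,dF$ with $K=|\Delta|+|H\Delta|$, which is again controlled by Cauchy-Schwarz and one further application of Hardy's inequality.

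The main obstacle is the singularity of $1/C(y)$ as $y\uparrow b_F$: a naive Cauchy-Schwarz bound on $(\int_{T}^{Y}D/C^{2}\,dF^{*})^{2}$ leaves a non-integrable $(1-F)^{-1}$ tail near $b_F$. The cancellation provided by the $(1-F(w))$-factor in $D(w)$ is visible only through the covariance identity above, so this identity must be kept intact rather than bounding $\mathbf{1}_{T\leq y\leq Y}$ crudely by $1$. Once this technical estimate is carried out, the bracketing integral inequality follows and the conclusion of the lemma is immediate.
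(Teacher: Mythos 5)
Your overall strategy is the same as the paper's: both proofs use the monotonicity of $\varphi \mapsto \zeta(\varphi)$ to turn each $\epsilon$-bracket $[l,u]$ of $\mathcal{F}$ into an explicit bracket of $\mathcal{G}$ (your $\underline{\zeta},\overline{\zeta}$ are exactly the paper's $g^{l}_{j},g^{u}_{j}$), and then reduce the lemma to showing that the induced bracket has width at most a constant multiple of $d(l,u)$. Where you genuinely diverge is in how that width is estimated, and your route is the one that actually works. The paper bounds the occurrences of $1/C(y)$ and $1/C^{2}(y)$ by powers of $\left(\inf_{a_G<y<b_F}C(y)\right)^{-1}$ and concludes $d^2(g^l,g^u)\le C\,d^2(l,u)$; but since $C(y)=\alpha^{-1}G(y)(1-F(y^-))\to 0$ as $y\uparrow b_F$, that infimum is zero, and the paper's assertion that its reciprocal is positive is vacuous where finiteness is what is needed --- a naive bound indeed leaves the non-integrable $(1-F)^{-2}\,dF$ tail you identify. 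Your estimate instead keeps the factor $1-F$ intact, writes $D(w)/(1-F(w))=\Delta(w)+(H\Delta)(w)$, and controls everything through Hardy's inequality in $L^2(F)$ together with the covariance identity for $I_{\{T\le y\le Y\}}$ under $H^{*}$, after which the double integral collapses to $\int K\,(HK)\,dF$ --- precisely the cancellation that makes $\zeta(\varphi)$ square-integrable in Stute and Wang's own variance computation. This buys a finite constant $K=K(\alpha,G(a_F))$ (only $G\ge G(a_F)>0$ on the support of $F$ is used, which is where Assumption~$\mathbf{B}$ enters), and it also fixes the choice of norm on $\mathcal{G}$: you correctly measure bracket widths in $L^2(H^{*})$, the law of the observations driving $U_n$, which is what Ossiander's theorems require, whereas the paper integrates $(g^u-g^l)^2$ against $dF$. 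Modulo writing out the Fubini and Cauchy--Schwarz steps you sketch, your argument is complete, and I would regard it as a repair of the paper's proof rather than merely an alternative to it.
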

\begin{proof}
This result is an easy consequence of Jensen and $c_p$ inequalities. Fix  $\epsilon>0.$ By definition of $N_{[]}(\epsilon),$ there exists $$\left\{\left[l_{0},{u}_{0}\right],\ldots,[l_{N_{[]}(\epsilon)},{u}_{N_{[]}(\epsilon)}]\right\}$$ so that for every $\varphi \in \mathcal{F}$ there exists
$0 \leq i\leq N_{[]}(\epsilon)$ satisfying ${l}_{i}\leq \varphi \leq {u}_{i}$ and $d({l}_{i},{u}_{i})<\epsilon.$ Let $g\in \mathcal{G}.$ Then $g=\zeta(\varphi)$ for some $\varphi\in \mathcal{F}.$ Now define the brackets for the class $\mathcal{G}$ by the equations
\begin{eqnarray*}
g^{l}_{j}:=\frac{\int_{Y<t}\left({l}_{j}(Y)-{u}_{j}(t)\right)d F(t)}{C(Y)}-\int_{T}^{Y}\frac{\int_{y<t}\left({u}_{j}(y)-{l}_{j}(t)\right)d F(t)}{C^{2}(y)} dF^{*}(y)
\end{eqnarray*}
and
\begin{eqnarray*}
g^{u}_{j}:=\frac{\int_{Y<t}\left({u}_{j}(Y)-{l}_{j}(t)\right)d F(t)}{C(Y)}-\int_{T}^{Y}\frac{\int_{y<t}\left({l}_{j}(y)-{u}_{j}(t)\right)d F(t)}{C^{2}(y)} dF^{*}(y),
\end{eqnarray*}
for $j=0,\ldots,N_{[]}(\epsilon).$ Simplify the notations by writing ${l}={l}_{j}, {u}={u}_{j}, g^{l}=g^{l}_{j}$ and $g^{u}=g^{u}_{j}.$ Obviously, we have $g^{l}\leq g\leq g^{u}.$ Using $c_p$ inequality and Jensen inequality, we have
\begin{eqnarray*}
d^{2}(g^{l},g^{u})&=& \int{(g^{u}-g^{l})}^{2}dF\\
&=&\int\Big(\int_{Y<t}\frac{\big[{u}(Y)-{l}(Y)+{u}(t)-{l}(t)\big]}{C(Y)}dF(t)\\
&&+\int_{T}^{Y}\int_{y<t}\frac{\left[({u}(y)-{l}(y))+({u}(t)-{l}(t))\right]}{C^{2}(y)}dF(t)dF^{*}(y){\Big)}^{2} dF\\
&\leq& C\int{\Big[\int_{Y<t}\frac{\big[{u}(Y)-{l}(Y)+{u}(t)-{l}(t)\big]}{C(Y)}dF(t)\Big]}^{2}dF\\
&&+ C\int{\Big[\int_{T}^{Y}\int_{y<t}\frac{\big[{u}(y)-{l}(y)+{u}(t)-{l}(t)\big]}{C^{2}(y)}dF(t)dF^{*}(y)\Big]}^{2}dF\\
&\leq& C\int\Big[\int\frac{{\big({u}(Y)-{l}(Y)+{u}(t)-{l}(t)\big)}^{2}}{C^{2}(Y)}dF(t)\Big]dF\\
&& +C \int \Big[\int_{T}^{Y}\frac{{u}(y)-{l}(y)}{C^{2}(y)}dF^{*}(y)\\
&+&\int_{T}^{Y}\frac{1}{C^{2}(y)} \left( \int( {u}(t)-{l}(t))d F(t)\right)d F^{*}(y){\Big]}^{2}dF\\
&\leq& C\int\Big[\int\frac{{\big({u}(Y)-{l}(Y)+{u}(t)-{l}(t)\big)}^{2}}{C^{2}(Y)}dF(t)\Big]dF\\
&\quad+& C\int{\left[\int_{0}^{\infty}\frac{{u}(y)-{l}(y)}{C^{2}(y)}dF^{*}(y)\right]}^{2}dF\\
\end{eqnarray*}
\begin{eqnarray*}
&\quad+& C\int{\left[\int_{0}^{\infty}\frac{1}{C^2(y)}\left(\int\left(u(t)-l(t)\right)dF(t)\right)dF^{*}(y)\right]}^{2}dF\\
&\leq&C\int\Big[\int\frac{{\big({u}(Y)-{l}(Y)+{u}(t)-{l}(t)\big)}^{2}}{C^{2}(Y)}dF(t)\Big]dF\\
&\quad+&C\int\int_{0}^{\infty}\frac{{\left(u(y)-l(y)\right)}^2}{C^4(y)}dF^{*}(y)dF\\
&\quad+&C\int \int_{0}^{\infty}\frac{1}{C^4(y)}{\left(\int\left(u(t)-l(t)\right)dF(t)\right)}^{2}dF^{*}(y)dp\\
&\leq&C\int\Big[\int\frac{{\big({u}(Y)-{l}(Y)+{u}(t)-{l}(t)\big)}^{2}}{C^{2}(Y)}dF(t)\Big]dF\\
&\quad+&C\int\int_{0}^{\infty}\frac{{\left(u(y)-l(y)\right)}^2}{C^4(y)}dF^{*}(y)dF\\
&\quad+&C\int \int_{0}^{\infty}\frac{1}{C^4(y)}\left(\int{\left(u(t)-l(t)\right)}^{2}dF(t)\right)dF^{*}(y)dF\\
&=:&A_1+A_2+A_3.
\end{eqnarray*}
 The function  $C(\cdot)$ is strictly positive on  $a_G <y< b_F$, thus $$\left( \inf_{a_G <y< b_F} C(y) \right)^{-1}>0.$$
Using $c_p$ inequality, we see that
\begin{eqnarray*}
A_1&\leq&C\int\int\frac{{\left(u(Y)-l(Y)\right)}^2}{C^2(Y)}dF(t)dF\\
&\quad+&C\int\int\frac{{\left(u(t)-l(t)\right)}^2}{C^2(Y)}dF(t)dF\\
&\leq& C\left( \inf_{a_G <y< b_F} C(y) \right)^{-1}d^2(u,l).
\end{eqnarray*}
Also
\begin{eqnarray*}
A_2
&\leq & C\left( \inf_{a_G <y< b_F} C(y) \right)^{-4}d^2(u,l).
\end{eqnarray*}
For the third term $C$,
\begin{eqnarray*}
A_3
&\leq& C\left( \inf_{a_G <y< b_F} C(y) \right)^{-4}d^2(u,l).
\end{eqnarray*}
Hence,
\begin{eqnarray*}
d^2(g^l,g^u)\leq C d^2(l,u).
\end{eqnarray*}
Finally the result follows from condition $J(1)<\infty.$
\end{proof}
\begin{proof}[\textbf{Proof of Proposition \ref{tight}}.]
Notice that,
\[ |G_n({\varphi}_1) - G_n({\varphi}_2) |\leq |U_n({\varphi}_1)-U_n({\varphi}_2) | +2\sqrt{n} || R_n ||_{\mathcal{F}} \quad a.s. \]
Now from Lemma \ref{kol} and Lemma \ref{tightR} we can see
\begin{eqnarray*}
P^*\left\{{||G_n||}_{\delta}>3\epsilon\right\}\leq  P^*\left\{{||U_n||}_{\delta}>\epsilon\right\}+ P^*\left\{{2n^{1/2}||R_n||}_{\mathcal{F}}>2\epsilon\right\}<3\epsilon.
\end{eqnarray*}
eventually. This complete the proof.
\end{proof}

\end{document}